\newtheorem{theorem}{Theorem}[section]
\newtheorem{lemma}[theorem]{Lemma}
\newtheorem{prop}[theorem]{Proposition}
\theoremstyle{definition}
\theoremstyle{remark}
\newtheorem{remark}[theorem]{Remark}
\numberwithin{equation}{section}
\def\N{{\mathbb N}}
\def\Z{{\mathbb Z}}
\def\R{{\mathbb R}}
\def\aread{{\rm Diff}_{\Omega}^{\infty}(D^{2}, \partial D^{2})}
\def\Cal{{\rm Cal}}
\def\Ker{{\rm Ker}}
\newcommand\gmet[1]{\mathcal{M}({#1})}
\def\volm{{\rm Diff}_{\Omega}^{\infty}(M)_{0}}
\def\hamu{{\rm Ham}^{\infty}(U)}
\def\hamr{{\rm Ham}^{\infty}_{C}(\R ^{2n})}
\def\hamd{{\rm Ham}^{\infty}_{C}(D^{2n})}
\def\hamm{{\rm Ham}^{\infty}(M)}
\begin{document}
\title[Quasi-isometry type of  the kernel of the Calabi homomorphism]
{Quasi-isometry type of  the metric space 
derived from the kernel of the Calabi homomorphism}
\author{Tomohiko Ishida}
\address{Department of Mathematics, 
Kyoto University, 
Kitashirakawa Oiwake-cho, Sakyo-ku, Kyoto 606-8502, Japan.}
\email{ishidat@math.kyoto-u.ac.jp}
\subjclass[2010]{Primary 37C15, Secondary 37E30}
\date{\today}

\begin{abstract}
We prove that 
the set of symmetrized conjugacy classes 
of the kernel of the Calabi homomorphism 
on the group of area-preserving diffeomorphisms of the $2$-disk 
is not quasi-isometric to the half line. 	
\end{abstract}
\keywords{area-preserving diffeomorphisms, symplectomorphisms, 
Hamiltonian diffeomorphisms, 
Calabi homomorphisms, quasi-morphisms, pseudo-characters, 
partial quasi-morphisms, 
metric spaces, quasi-isometries}

\maketitle
\section{Introduction}
Suppose that $G$ is a simple group 
and $K\subseteq G$ is a subset. 
Here, we assume that $K$ contains non-trivial elements of $G$. 
Since the group $G$ is simple, 
any non-trivial element $g$ of $G$ 
can be written as a product of conjugates of elements of $K\cup K^{-1}$. 
We define for each $g\in G$ the number $q_K(g)$ 
by the minimal number of conjugates of elements of $K\cup K^{-1}$ 
whose product is equal to $g$. 
Here, for the identity element $e$, we define $q_K(e)=0$. 
The function $q_K\colon G\to \Z _{\geq 0}$ is obviously invariant under conjugations 
and defines a conjugation-invariant norm on $G$. 
Such a conjugation-invariant norm 
is called a {\it conjugation-generated norm}. 
In this paper, we mainly consider 
the case $K$ consists 
of a single non-trivial element. 

Elements $f$ and $g$ of a group $G$ 
are {\it symmetrized conjugate} to each other 
if $f$ is conjugate to $g$ or $g^{-1}$. 
It is easy to see 
that symmetrized conjugacy is an equivalence relation. 
We denote by $[g]$ 
the symmetrized conjugacy class represented by $g\in G$. 
We define $\gmet{G}$ 
to be the set of non-trivial symmetrized conjugacy classes 
of elements of $G$. 
In \cite{tsuboi09}, 
Tsuboi introduced a metric $d$ on $\gmet{G}$ defined by 
\[ d([f], [g])=\log\max\{ q_{\{ g\}}(f), q_{\{ f\}}(g)\} . \]
In fact, 
it is easy to see that the inequality
\[ q_{\{ f\}}(h)\leq q_{\{ f\}}(g)q_{\{ g\}}(h) \]
holds for any $f, g, h\in G$ 
and thus the function $d\colon\gmet{G}\times\gmet{G}\to\R _{\geq 0}$ 
satisfies the triangle inequality. 
We are interested in this metric space $\gmet{G}$, 
which is an invariant of simple group. 

In \cite{kodama11p}, 
Kodama studied the metric space $(\gmet{G}, d)$ 
for the case $G$ is the infinite alternating group $A_{\infty}$ 
and proved the following. 

\begin{theorem}[Kodama \cite{kodama11p}]
The metric space $(\gmet{A_{\infty}}, d)$ is quasi-isometric to the half line. 
\end{theorem}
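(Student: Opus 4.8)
The plan is to produce an explicit quasi-isometry $\Psi\colon \gmet{A_{\infty}}\to[0,\infty)$ and to reduce the whole statement to a single estimate on Tsuboi's metric. First I recall that two finitely supported permutations are conjugate in $A_{\infty}$ exactly when they have the same cycle type: although some classes split in the finite groups $A_{n}$, in $A_{\infty}$ one may always absorb a transposition of two points lying outside a (finite) support into the conjugating element, so the $A_{\infty}$-classes coincide with the $S_{\infty}$-classes. In particular the number $|\supp(f)|$ of moved points is a symmetrized-conjugacy invariant, so $\Psi([f]):=\log|\supp(f)|$ is well defined on $\gmet{A_{\infty}}$, with image $\{\log k : k\geq 3\}$ (a nontrivial even permutation moves at least three points). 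The key claim, from which everything follows, is that there is a constant $C$ with $\bigl|\,d([f],[g])-|\Psi([f])-\Psi([g])|\,\bigr|\leq C$ for all classes; granting this, $\Psi$ is a $(1,C)$-quasi-isometry onto its image, and since the consecutive gaps satisfy $\log(k+1)-\log k=\log(1+1/k)\leq\log(4/3)$ while the least value is $\log 3$, the image is coarsely dense in $[0,\infty)$, giving the theorem.

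To prove the key estimate I would analyze $q_{\{g\}}(f)$ through $|\supp|$. The lower bound is immediate: since $\supp(h_{1}\cdots h_{k})\subseteq\bigcup_{i}\supp(h_{i})$ and every conjugate of $g^{\pm1}$ has support of size $|\supp(g)|$, any product of $k$ such conjugates equal to $f$ forces $k\geq|\supp(f)|/|\supp(g)|$, hence $q_{\{g\}}(f)\geq|\supp(f)|/|\supp(g)|$. For the matching upper bound I would first write $f$ as a product of $O(|\supp(f)|)$ three-cycles, group them into $O(|\supp(f)|/|\supp(g)|)$ blocks, each of support at most $|\supp(g)|$, and express each block as a bounded number of conjugates of $g^{\pm1}$. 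This reduces the upper bound to a uniform covering statement.

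The crux, and what I expect to be the main obstacle, is that uniform statement: \emph{there is a universal constant $C_{1}$ such that every even permutation $f$ with $|\supp(f)|\leq|\supp(g)|$ is a product of at most $C_{1}$ conjugates of $g^{\pm1}$.} Working inside a finite $A_{N}$ with $N=|\supp(g)|+2$, the element $g$ moves all but two points, and the content is that a conjugacy class of such near-full support has covering number bounded independently of $N$ \emph{and} of the cycle type of $g$; this is where I would invoke, and where necessary adapt to secure uniformity in the cycle type, the known covering-number results for alternating groups for elements moving a definite proportion of the points. The same lemma, applied with the roles of $f$ and $g$ exchanged, also yields $q_{\{f\}}(g)\leq C_{1}$ whenever $|\supp(g)|\leq|\supp(f)|$.

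Finally I would assemble the estimate. Assume $|\supp(f)|\geq|\supp(g)|$. The block construction gives $q_{\{g\}}(f)\leq C_{2}\,|\supp(f)|/|\supp(g)|+C_{2}$, the subadditivity bound gives $q_{\{g\}}(f)\geq|\supp(f)|/|\supp(g)|$, and the lemma gives $q_{\{f\}}(g)\leq C_{1}$. Taking the maximum and then the logarithm, the multiplicative constants $C_{1},C_{2}$ turn into additive ones, so in every regime $d([f],[g])=\log|\supp(f)|-\log|\supp(g)|+O(1)=|\Psi([f])-\Psi([g])|+O(1)$, which is exactly the key estimate. The decisive point is that the logarithm in Tsuboi's metric converts the multiplicative gap between the two bounds on $q_{\{g\}}(f)$ into a bounded additive error, so the multiplicative constant of the quasi-isometry is $1$ and the space collapses onto a half line; the real work is concentrated in the uniform covering lemma above.
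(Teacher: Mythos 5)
This theorem is not proved in the paper at all: it is Kodama's result, quoted from \cite{kodama11p}, so there is no in-paper proof to compare against, and your proposal has to be judged on its own terms. On those terms, your architecture is the right one and, to my knowledge, is essentially the standard route to this statement: $\Psi([f])=\log|\supp(f)|$ is well defined (your observation that $A_{\infty}$-classes do not split, because one can always absorb a transposition supported outside a finite set into the conjugator, is correct), the lower bound $q_{\{g\}}(f)\geq|\supp(f)|/|\supp(g)|$ from subadditivity of supports is correct, and the reduction of the upper bound to a block decomposition into $3$-cycles plus a uniform covering statement is sound. The assembly at the end is also right: since $q_{\{f\}}(g)$ is bounded when $|\supp(g)|\leq|\supp(f)|$, the maximum in Tsuboi's metric is controlled by $q_{\{g\}}(f)$, and the logarithm converts the multiplicative gap between your two bounds into an additive constant.

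The one genuine gap --- which you correctly identify as the crux rather than glossing over --- is the uniform covering lemma: that an element $g$ moving all but two points of $\{1,\dots,N\}$ has the property that every even permutation of support at most $|\supp(g)|$ is a product of a universally bounded number of conjugates of $g^{\pm1}$, with the bound independent of $N$ \emph{and} of the cycle type of $g$. This is true but is not elementary; it is exactly the statement that the covering number of a class of support $s$ in $A_{n}$ is $O(n/s)$ with an absolute implied constant, which goes back to work of Bertram, Brenner and Dvir on covering numbers of alternating groups. As written, your argument is an outline conditional on that lemma; to make it a complete proof you must either cite such a result in a form that gives uniformity over cycle types (the versions stated only for cycles, or only for support a fixed proportion $\epsilon n$ with a constant depending on $\epsilon$, do not immediately suffice when $s$ is tiny relative to $n$ --- though in your normalization $s=n-2$, so the proportion is bounded below and the dependence on $\epsilon$ is harmless) or prove it directly, e.g.\ by showing that $g\cdot(g')^{-1}$ for a suitable conjugate $g'$ realizes any prescribed $3$-cycle in boundedly many steps. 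One further small point worth a sentence in a final write-up: the covering lemma is applied inside a finite $A_{N}$, but conjugation in the larger group $A_{\infty}$ only gives you more conjugators, so this is safe for the upper bound.
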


We define the $2$-disk $D^{2}$ 
and the standard area form $\Omega$ on $D^{2}$ to be 
\[ D^{2}=\{ (x, y)\in\R ;x^2+y^2\leq 1\}
\text{ and }\Omega =\frac{1}{\pi}dx\wedge dy \]
respectively. 
Let $\aread$ be the group of $C^\infty$-diffeomorphisms 
of the $2$-disk $D^2$,
which preserve $\Omega$ 
and are the identity on a neighborhood of the boundary. 
It is classically known 
that the group $\aread$ admits a homomorphism
\[ \Cal\colon\aread\to\R \]
called the Calabi homomorphism. 
The Calabi homomorphism gives an abelianization of $\aread$ 
and its kernel $\Ker\Cal$ is simple \cite{banyaga}. 
In this paper, 
we study the metric space $(\gmet{G}, d)$ 
for the case $G=\Ker\Cal$ 
and prove the following theorem. 

\begin{theorem}\label{main}
For any non-trivial element $f\in\Ker\Cal$, 
there exist a sequence $\{ f_{n}\}_{n\geq 0}$ contained in $\Ker\Cal$ 
with $f_{0}=f$,  
an element $g\in\Ker\Cal$ 
and positive constants $C_{1}, C_{2}, C_{3}$ 
which satisfy the following. 
\begin{enumerate}
\item[(i)] $d([f_{n}], [f_{m}])\geq C_{1}|n-m|$, 
\item[(ii)] $d([f_{n}], [f_{n+1}])\leq C_{2}$,  
\item[(iii)] $d([f_{n}], [g^{m}])\geq\log m+C_{3}$.  
\end{enumerate}
\end{theorem}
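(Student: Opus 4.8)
The plan is to drive all three estimates with the standard quasi-morphism lower bound, and to build the sequence $\{f_n\}$ by iterated ``doubling'' of a fixed gadget. First I record the tool: if $\mu$ is a homogeneous quasi-morphism on $\Ker\Cal$ with defect $D(\mu)$, then writing $a$ as a product of $k$ conjugates of $b^{\pm1}$ and applying $\mu$ gives $|\mu(a)|\le k|\mu(b)|+(k-1)D(\mu)$, hence
\[ q_{\{b\}}(a)\ge\frac{|\mu(a)|}{|\mu(b)|+D(\mu)}\qquad\text{and}\qquad d([a],[b])\ge\log\frac{|\mu(a)|}{|\mu(b)|+D(\mu)}. \]
The same inequality holds for a conjugation-invariant \emph{partial} quasi-morphism, provided the supports occurring stay in the region where its quasi-additivity is controlled.

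For (iii) I would fix a genuine homogeneous quasi-morphism $\nu$ and an element $g\in\Ker\Cal$ with $\nu(g)\neq0$, arranging $\nu(f_n)=0$ for every $n$. The clean way to guarantee this is to build each $f_n$ to be conjugate to its own inverse: reversibility forces $\mu(f_n)=0$ for \emph{every} homogeneous quasi-morphism $\mu$. Then $\nu(g^m)=m\,\nu(g)$ yields $q_{\{f_n\}}(g^m)\ge m|\nu(g)|/D(\nu)$, so $d([f_n],[g^m])\ge\log m+\log\bigl(|\nu(g)|/D(\nu)\bigr)$; choosing $g$ with $|\nu(g)|$ large (possible since $\nu$ is unbounded) makes $C_3>0$. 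This step is essentially formal once the sequence is in place.

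The construction of $\{f_n\}$ is the core. Starting from $f_0=f$ (after a harmless initial adjustment so that $f_1$ is reversible with the desired local structure while $d([f_0],[f_1])$ stays bounded), I would set $f_{n+1}=\alpha f_n\alpha^{-1}\cdot\beta f_n\beta^{-1}$, two disjointly supported copies of $f_n$ carried by area-preserving maps, so that the two factors commute. Because they commute and one factor is displaceably supported, a suitable partial Calabi-type quasi-morphism $\zeta$ satisfies $\zeta(f_{n+1})=2\zeta(f_n)$, so $\zeta(f_n)=2^n\zeta(f_1)$ grows geometrically; feeding this into the lower bound gives $q_{\{f_m\}}(f_n)\gtrsim 2^{\,n-m}$ and hence (i) with $C_1\approx\log2$. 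Simultaneously $f_{n+1}$ is visibly a product of two conjugates of $f_n$, so $q_{\{f_n\}}(f_{n+1})\le2$; for the reverse cost I would cancel one of the two copies inside $f_{n+1}$ using the swapping involution together with the reversibility of $f_n$, thereby bounding $q_{\{f_{n+1}\}}(f_n)$ and obtaining (ii). The multiplicative inequality $q_{\{f\}}(h)\le q_{\{f\}}(g)\,q_{\{g\}}(h)$ then automatically supplies the matching upper bounds $q_{\{f_m\}}(f_n)\le C^{\,n-m}$.

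The main obstacle is making (i) and (ii) coexist. A \emph{genuine} homogeneous quasi-morphism cannot achieve this: if $\mu(f_{n+1})=2\mu(f_n)\neq0$, then applying $\mu$ to any expression of $f_n$ as $\prod_{i}c_i f_{n+1}^{\epsilon_i}c_i^{-1}$ forces $|\mu(f_n)||1-2\textstyle\sum_i\epsilon_i|\le (N-1)D(\mu)$, and since $1-2\sum_i\epsilon_i$ is a nonzero integer this gives $q_{\{f_{n+1}\}}(f_n)\ge 1+|\mu(f_n)|/D(\mu)\to\infty$, destroying (ii). The escape is exactly the distinction flagged by the keywords: the geometric growth in (i) must be read off by a \emph{partial} quasi-morphism $\zeta$, which is additive only across a displaceably supported factor, so that $\zeta(f_n^{-1})=-\zeta(f_n)$ need not hold and the parity obstruction above does not apply; meanwhile every genuine quasi-morphism vanishes on the reversible $f_n$, which is precisely what keeps $q_{\{f_{n+1}\}}(f_n)$ bounded and simultaneously powers (iii). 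The two points I expect to demand the most care are showing that $\zeta$ stays controlled along the entire doubling tower—the supports remaining displaceable and the defect not accumulating—and that the reverse cost in (ii) is genuinely bounded rather than merely unobstructed by quasi-morphisms.
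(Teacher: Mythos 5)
Your proposal has a concrete, fatal obstruction at its core: the doubling tower cannot be built inside $D^{2}$. If $f_{n+1}=\alpha f_{n}\alpha^{-1}\cdot\beta f_{n}\beta^{-1}$ with the two factors disjointly supported and $\alpha,\beta$ area-preserving, then $\area(\supp f_{n+1})=2\,\area(\supp f_{n})$, so $\area(\supp f_{n})=2^{n}\area(\supp f)$, which exceeds the total area of the disk once $n>\log_{2}\bigl(1/\area(\supp f)\bigr)$. The sequence $\{f_{n}\}_{n\geq 0}$ you describe therefore terminates after finitely many steps and cannot satisfy (i) and (ii) for all $n$. Even setting this aside, the two load-bearing ingredients you flag as ``demanding care'' are genuinely unavailable as stated: the partial quasi-morphism $\zeta$ must be conjugation-invariant, satisfy a defect bound that survives conjugating $f_{m}$ by \emph{arbitrary} elements (whose images of $\supp f_{m}$ need not stay displaceable), and have $\zeta(f_{1})\neq 0$; but the known Calabi-type partial quasi-morphisms restrict to the Calabi homomorphism on displaceably supported elements and hence vanish on every element of $\Ker\Cal$ with displaceable support, which kills $\zeta(f_{1})$. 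The bounded reverse cost $q_{\{f_{n+1}\}}(f_{n})\leq C$ is also never actually produced: the swapping/reversibility tricks naturally yield $f_{n}^{2}$ or ``$f_{n}$ times a disjoint copy of $f_{n}^{-1}$'' from two conjugates of $f_{n+1}$, not $f_{n}$ itself.

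The paper runs the construction in exactly the opposite direction, which dissolves all of these problems at once. It sets $f_{n}=s_{r}^{n}(f)$, where $s_{r}$ is the \emph{shrinking} homomorphism conjugating $f$ into the disk of radius $1/r$; supports contract, so the sequence is defined for all $n$. Property (i) needs no quasi-morphism at all: since $\area(\supp s_{r}^{m}(f))=r^{2(n-m)}\area(\supp s_{r}^{n}(f))$, one needs at least $r^{2(n-m)}$ conjugates, giving $d([f_{n}],[f_{m}])\geq (2\log r)\lvert n-m\rvert$. Property (ii) is immediate because $s_{r}$ is a homomorphism: applying $s_{r}^{n}$ to a decomposition of $s_{r}(f)$ into conjugates of $f^{\pm 1}$ gives $q_{\{f_{n}\}}(f_{n+1})\leq q_{\{f\}}(s_{r}(f))$. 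For (iii) the paper uses a genuine homogeneous quasi-morphism $\phi'$ from Gambaudo--Ghys/Brandenbursky's construction with $\phi'(s_{r}(h))=r^{-6}\phi'(h)$, so $\phi'$ is automatically bounded on $\{f_{n}\}$ (no reversibility needed) and Lemma \ref{bound-power} applies with any $g$ satisfying $\phi'(g)\neq 0$. Your observation that a genuine homogeneous quasi-morphism with $\mu(f_{n+1})=2\mu(f_{n})\neq 0$ would contradict (ii) is correct and is a good sanity check, but the right conclusion to draw from it is that the detecting function should \emph{decay} along the sequence (as in Proposition \ref{shrink}), not that one should pass to partial quasi-morphisms.
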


As a corollary, 
we obtain the following statement 
answering to a problem raised by Tsuboi \cite[Problem4.4]{tsuboiGF13}. 

\begin{theorem}\label{cor}
The metric space $(\gmet{\Ker\Cal}, d)$ is not quasi-isometric to the half line. 
\end{theorem}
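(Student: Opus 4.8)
The plan is to argue by contradiction and show that the three conclusions of Theorem~\ref{main} cannot coexist with a quasi-isometry onto the half line $[0,\infty)$. Since being quasi-isometric is a symmetric relation, I may assume there is a quasi-isometry $\phi\colon\gmet{\Ker\Cal}\to[0,\infty)$, so that there are constants $A\geq 1$ and $B\geq 0$ with $\tfrac1A\,d([x],[y])-B\leq|\phi([x])-\phi([y])|\leq A\,d([x],[y])+B$ for all classes $[x],[y]$. I then fix any non-trivial $f\in\Ker\Cal$ and invoke Theorem~\ref{main} to obtain the sequence $\{f_n\}$, the element $g$, and the constants $C_1,C_2,C_3$. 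The guiding picture is that conclusions (i) and (ii) force the images $\phi([f_n])$ to sweep out an entire ray in $[0,\infty)$, while (iii) produces a second family $\{[g^m]\}$ that escapes to infinity yet stays uniformly far from the $[f_n]$ --- a configuration the thin, one-ended half line cannot accommodate.

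First I would analyze the images $y_n:=\phi([f_n])$. By (ii) and the upper quasi-isometry bound, consecutive images satisfy $|y_{n+1}-y_n|\leq AC_2+B=:L$, while (i) and the lower bound give $|y_n-y_0|\geq\tfrac{C_1}{A}n-B\to\infty$, so $y_n\to\infty$. A short argument then shows $\{y_n\}$ is $L$-dense in the ray $[y_0,\infty)$: for $t\geq y_0$ take the first index $N$ with $y_N\geq t$; then $y_{N-1}<t\leq y_N$ and the bounded jump $y_N-y_{N-1}\leq L$ forces $t\leq y_N<t+L$, so $y_N$ lies within $L$ of $t$. Thus every sufficiently large value on the half line is within distance $L$ of some $\phi([f_n])$.

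Next I would locate the images of the powers of $g$. Applying (iii) with $n=0$ gives $d([f_0],[g^m])\geq\log m+C_3\to\infty$, so by the lower bound $|\phi([g^m])-y_0|\to\infty$; since $\phi([g^m])\geq 0$ and $y_0$ is fixed, this forces $\phi([g^m])\to\infty$. Hence for all large $m$ we have $\phi([g^m])\geq y_0$, and by the density just established there is an index $n=n(m)$ with $|\phi([g^m])-y_n|\leq L$. The lower quasi-isometry bound then yields $d([f_n],[g^m])\leq A(L+B)$, a constant independent of $m$. But (iii) demands $d([f_n],[g^m])\geq\log m+C_3$, which exceeds any fixed constant once $m$ is large. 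This contradiction completes the proof.

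I expect the only genuine subtlety in this reduction to be the density step --- verifying that a sequence escaping to infinity with uniformly bounded consecutive gaps really fills an entire ray, which is precisely the feature distinguishing the one-ended half line (the deep work, of course, is Theorem~\ref{main} itself, which I am assuming). It is worth emphasizing that the logarithmic rate in (iii) is far more than this corollary requires: \emph{any} lower bound on $d([f_n],[g^m])$ tending to infinity with $m$ would already yield the contradiction, so the strength of the $\log m$ estimate is really aimed at the finer quantitative content of Theorem~\ref{main}.
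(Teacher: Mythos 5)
Your argument is correct and is essentially the paper's own proof: both use (i) to force the images $\Phi([f_n])$ to escape to infinity, (ii) to show some $\Phi([f_{n}])$ lies within a uniform distance of $\Phi([g^m])$, and (iii) to contradict the resulting uniform bound on $d([f_n],[g^m])$ as $m\to\infty$. Your write-up merely makes the ``bounded gaps imply coarse density in the ray'' step explicit, which the paper compresses into the choice of the minimal index $n_m$ with $\Phi([g^m])<\Phi([f_{n_m}])$.
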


\section{Quasi-morphisms}

In this section, 
we prepare a notion of quasi-morphism, 
which is a useful tool to evaluate a lower bound 
for a conjugation-generated norm $q_{K}$ 
and prove Proposition \ref{shrink}. 
On quasi-morphisms and conjugation-generated norms, 
see \cite{bip08} for more details. 

Let $G$ be a group. 
A {\it quasi-morphism} on $G$ is a function 
$\phi\colon G\to\R$ 
such that there exists a constant $C\geq 0$ 
and $|\phi (gh)-\phi (g)-\phi (h)|\leq C$ 
for any $g, h\in G$. 
The real number
\[ D(\phi ) =\sup _{g, h\in G} |\phi (gh)-\phi (g)-\phi (h)| \]
is called the {\it defect} of $\phi$.
A quasi-morphism $\phi$ on $G$ is {\it homogeneous} 
if $\phi (g^{p})=p\phi (g)$ for any $g\in G$ and any $p\in\Z$. 
For any quasi-morphism $\phi$ 
on an arbitrary group $G$, 
there exists a unique homogeneous quasi-morphism $\tilde{\phi}$ on $G$ 
such that $\tilde{\phi}-\phi$ is a bounded function on $G$ 
and $\tilde{\phi}$ is explicitly written as 
\[ \tilde{\phi}(g)
=\lim _{p\to\infty}\frac{1}{p}\phi (g^{p}). \] 
We denote by $Q(G)$ the $\R$-vector space 
consisting of homogeneous quasi-morphisms on $G$. 
Note that homogeneous quasi-morphisms 
are invariant under conjugations.

\subsection{Conjugation-invariant norms and quasi-morphisms}
Let  $K$ be a subset of $G$. 
We define the vector subspace $Q(G, K)$ of $Q(G)$ by
\[ Q(G, K)=\{\phi\in Q(G); \phi \text{ is bounded on } K \} . \]
Note that this definition is different from that given in \cite{bip08}. 
Suppose that $g\in G$ is written as 
\[ g=f_1\dots f_n, \]
where $f_1, \dots , f_n$ are conjugates of elements of $K\cup K^{-1}$.  
Then for $\phi\in Q(G, K)$ the inequation 
\[ |\phi (g)-\phi (f_1)-\dots -\phi (f_n)|\leq (n-1)(D(\phi )) \]
holds. 
If we set $C_{K}=\sup _{h\in K}|\phi (h)|$,  
then we have 
\[ \frac{|\phi (g)|}{D(\phi )+C_{K}}\leq n. \]
This means that 
\[ \frac{|\phi (g)|}{D(\phi )+C_{K}}\leq q_K(g). \]
Denoting by $[K]$ the set of symmetrized conjugacy classes 
represented by the elements of $K$, 
we have the following lemma 
on the metric $d$ of $\gmet{G}$.

\begin{lemma}\label{bound-power}
Let $\phi\in Q(G, K)$ and $g\in G$ such that $\phi (g)\neq 0$. 
Then 
\[ \log\frac{|\phi (g)|}{D(\phi )+C_{K}}\leq d([g], [K]). \]
In particular, 
\[ \log n+\log\frac{|\phi (g)|}{D(\phi )+C_{K}}\leq d([g^{n}], [K]) 
\text{ for any }n. \]
\end{lemma}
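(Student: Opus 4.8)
The plan is to combine the lower bound on $q_K$ already extracted from the quasi-morphism $\phi$ with an elementary monotonicity property of conjugation-generated norms, and then to unwind the definition of the metric $d$ together with the notion of distance to the set $[K]$.

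First I would record that the computation immediately preceding the lemma yields
\[ \frac{|\phi(g)|}{D(\phi)+C_K}\leq q_K(g) \]
for every $g\in G$. Since $\phi(g)\neq 0$ and $g$ is non-trivial, both sides are positive (with $q_K(g)\geq 1$), so applying the logarithm preserves the inequality and gives $\log\frac{|\phi(g)|}{D(\phi)+C_K}\leq\log q_K(g)$. It then remains to bound $\log q_K(g)$ from above by $d([g],[K])$.

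The key observation for that step is that $\{h\}\cup\{h^{-1}\}\subseteq K\cup K^{-1}$ for every $h\in K$, so any expression of $g$ as a product of conjugates of $h^{\pm1}$ is in particular an expression using conjugates of elements of $K\cup K^{-1}$; hence $q_K(g)\leq q_{\{h\}}(g)$. Consequently, for each $h\in K$,
\[ q_K(g)\leq q_{\{h\}}(g)\leq\max\{q_{\{h\}}(g),q_{\{g\}}(h)\}, \]
and taking logarithms while unwinding the definition of $d$ gives $\log q_K(g)\leq d([g],[h])$. Passing to the infimum over $h\in K$ yields $\log q_K(g)\leq d([g],[K])$, and chaining this with the previous display establishes the first inequality.

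Finally, for the statement about powers I would simply apply the first inequality to $g^n$ in place of $g$ and invoke homogeneity of $\phi$, namely $\phi(g^n)=n\phi(g)$, which remains non-zero. This turns the proved inequality into
\[ \log n+\log\frac{|\phi(g)|}{D(\phi)+C_K}=\log\frac{|\phi(g^n)|}{D(\phi)+C_K}\leq d([g^n],[K]). \]
I do not anticipate a genuine obstacle; the only point demanding care is the asymmetry between the one-sided lower bound supplied by $\phi$, which controls $q_K(g)$, and the symmetric maximum built into the definition of $d$, a gap bridged precisely by the monotonicity $q_K(g)\leq q_{\{h\}}(g)$.
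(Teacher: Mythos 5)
Your proof is correct and follows essentially the same route as the paper: the lower bound $\frac{|\phi(g)|}{D(\phi)+C_K}\leq q_K(g)$ from the defect estimate, the monotonicity $q_K(g)\leq q_{\{h\}}(g)$ for $h\in K$ to reach $d([g],[K])$, and homogeneity $\phi(g^n)=n\phi(g)$ for the power statement. The paper leaves these last two steps implicit, but your explicit treatment is exactly what is intended.
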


A simple group $G$ is {\it uniformly simple} 
if the metric space $(\gmet{G}, d)$ is bounded. 
This is equivalent to saying 
that $(\gmet{G}, d)$ is quasi-isometric to a point. 
Since $Q(G, K)=Q(K)$ for any bounded set $K$, 
if the group $G$ admits a non-trivial quasi-morphism 
then $(\gmet{G}, d)$ is unbounded by Lemma \ref{bound-power} 
and thus $G$ is not uniformly simple.

\subsection{Gambaudo-Ghys' construction of quasi-morphisms on $\aread$}

It is known that 
the vector space $Q(\aread )$ is infinite-dimensional 
\cite{ep03}\cite{gg04}\cite{ishida14}. 
To prove Theorem \ref{main}, 
we use quasi-morphisms on $\Ker\Cal$ 
obtained by Brandenbursky 
generalizing Gambaudo-Ghys' construction \cite{brandenbursky11}. 

Let $X_{n}(D^{2})$ be the $n$-fold configuration space of $D^{2}$. 
Fix a base point 
$x^{0}=(x_{1}^{0}, \dots , x_{n}^{0})\in X_{n}(D^{2})$. 
For any $g\in\aread$ 
and almost every $x=(x_{1}, \dots , x_{n})\in X_{n}(D^{2})$, 
we set a loop $l(g; x)\colon [0, 1]\to X_{n}(D^{2})$ by 
\[ l(g; x)(t)=\left\{ \begin{array}{ll}
\{ (1-3t)x_i^0+3tx_i\} &\displaystyle (0\leq t\leq\frac{1}{3}) \\[.6em]
\{ g_{3t-1}(x_i)\} &\displaystyle (\frac{1}{3}\leq t\leq\frac{2}{3})\\[.6em]
\{ (3-3t)g(x_i)+(3t-2)x_i^0\} &\displaystyle (\frac{2}{3}\leq t\leq 1), 
\end{array}\right. \]
where $\{ g_t\}_{t\in[0, 1]}$ is a path in $\aread$ 
such that $g_0$ is the identity and $g_1=g$. 
Of course for some $x\in X_{n}(D^{2})$ 
the loop $l(g; x)$ may not be defined. 
However, for almost every $x$ the loop $l(g; x)$ is well-defined.
We define the pure braid $\gamma (g; x)$ 
to be the homotopy class relative to the base point $x^{0}$ 
represented by the loop $l(g; x)$. 
Since the group of diffeomorphisms of $D^{2}$ 
is contractible \cite{smale59} 
and is homotopy equivalence to $\aread$ \cite{moser65}, 
the pure braid $\gamma (g; x)$ 
is independent of the choice of the path $\{ g_{t} \}$. 
Let $P_n(D^2)$ be the pure braid group on $n$-strands. 
For a homogeneous quasi-morphism $\phi$ on $P_{n}(D^{2})$, 
if we consider the function 
\[ g\mapsto \int _{x\in X_{n}(D^{2})}\phi (\gamma (g; x))\Omega ^{n}, \]
then this function is well-defined \cite{brandenbursky11}\cite{bk13}
and is a quasi-morphism on $\aread$
since the diffeomorphism $g$ preserves $\Omega$. 
Thus we have the linear map 
$\Gamma _{n}\colon Q(P_{n}(D^{2}))\to Q(\aread )$ 
defined by
\[ \Gamma _{n}(\phi )(g)
=\lim_{p\to\infty}\frac{1}{p}
\int _{x\in X_{n}(D^{2})}\phi (\gamma (g^{p}; x))\Omega ^{n}. \]

Let $B_{n}(D^{2})$ be the braid group on $n$ strands 
and $i\colon P_{n}(D^{2})\to B_{n}(D^{2})$ the natural inclusion. 
Then the linear map 
$Q(i)\colon Q(B_{n}(D^{2}))\to Q(P_{n}(D^{2}))$ 
is induced. 

For $r>1$, 
we denote by $D(r^{-1})$ the small disk   
$\{ (x, y)\in\R ; x^{2}+y^{2}\leq r^{-2}\}$ of radius $1/r$. 
Let $\varphi _{r}\colon D^{2}\to D(r^{-1})$ be the $C^{\infty}$-diffeomorphism 
defined by 
\[ \varphi _{r}(x, y)=\left(\frac{x}{r}, \frac{y}{r}\right). \]
We define the homomorphism 
$s_{r}\colon\aread\to\aread$ by 
\[ s_{r}(f)(x, y)=\left\{
\begin{array}{ll}
\varphi _{r}\circ f\circ\varphi _{r}^{-1}(x, y) & \text{if }(x, y)\in D(r^{-1}) \\ 
(x, y) & \text{if }(x, y)\notin D(r^{-1}) .
\end{array}
\right. \] 
Note that if $f$ is in $\Ker\Cal$, then $s_{r}(f)$ is also. 

Let $\sigma _{1}\in B_{3}(D^{2})$ be the braid on $3$ strands 
as indicated in Figure \ref{s1}.  
\begin{figure}[htbp]
\begin{center}
\unitlength 0.1in
\begin{picture}(  6.0000, 12.0000)( 16.0000,-34.0000)
%
\special{pn 20}%
\special{pa 1600 2200}%
\special{pa 2000 3400}%
\special{fp}%
%
\special{pn 20}%
\special{pa 2000 2200}%
\special{pa 1820 2746}%
\special{fp}%
%
\special{pn 20}%
\special{pa 1600 3400}%
\special{pa 1780 2860}%
\special{fp}%
%
\special{pn 20}%
\special{pa 2200 2200}%
\special{pa 2200 3400}%
\special{fp}%
\end{picture}%
\end{center}
\caption{the braid $\sigma _{1}$}
\label{s1}
\end{figure}
The following proposition is essentially introduced in \cite[Lemma 3.11]{bk13}. 

\begin{prop}\label{shrink}
If $\phi\in Q(B_{3})$ satisfies $\phi (\sigma _{1})=0$, 
then 
\[ \Gamma _{3}\circ Q(i)(\phi )(s_{r}(f))=
\frac{1}{r^6}\Gamma _{3}\circ Q(i)(\phi )(f) \] 
for any $f\in\aread$ and any $r>1$. 
\end{prop}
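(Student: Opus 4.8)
The plan is to localize the defining integral of $\Psi:=\Gamma_{3}\circ Q(i)(\phi)$ to the small disk $D(r^{-1})$, on which $s_{r}(f)$ is nothing but a rescaled copy of $f$, and to use the hypothesis $\phi(\sigma_{1})=0$ to force the complementary contribution to remain bounded, so that it disappears after homogenization. Write $\psi:=Q(i)(\phi)\in Q(P_{3}(D^{2}))$. Since $\phi$ is homogeneous and conjugation invariant and $\sigma_{1}^{2}$ is the pure braid that links the first two strands, the hypothesis yields $\psi(\sigma_{1}^{2})=2\phi(\sigma_{1})=0$, and the same holds for every conjugate of $\sigma_{1}^{2}$; in particular $\psi$ vanishes on any cyclic subgroup generated by such a linking braid. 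Because $s_{r}$ is a homomorphism we have $s_{r}(f)^{p}=s_{r}(f^{p})$, and as $\Psi$ is homogeneous it suffices to compare the unnormalized integrals $N_{p}:=\int_{X_{3}(D^{2})}\psi(\gamma(s_{r}(f^{p});x))\,\Omega^{3}$ and $M_{p}:=\int_{X_{3}(D^{2})}\psi(\gamma(f^{p};x))\,\Omega^{3}$; I will show
\[ N_{p}=\tfrac{1}{r^{6}}M_{p}+O(1)\quad\text{uniformly in }p, \]
after which dividing by $p$ and letting $p\to\infty$ gives $\Psi(s_{r}(f))=\tfrac{1}{r^{6}}\Psi(f)$.

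The next step is to decompose $X_{3}(D^{2})$ according to how many of the three points lie in $D(r^{-1})$. I record three geometric facts that drive everything. First, $s_{r}(f^{p})$ is the identity off $D(r^{-1})$, so any point outside $D(r^{-1})$ stays fixed throughout the middle third of the loop $l(s_{r}(f^{p});x)$. Second, $s_{r}(f^{p})$ preserves the convex disk $D(r^{-1})$, so points starting inside stay inside during the middle third, and two such points—confined to a disk that excludes any outside point—cannot wind around an outside point; the middle-third braid they generate therefore lies in the cyclic group generated by a single $\sigma^{2}$. Third, the first and third legs of $l(g;x)$ are affine, and two line segments in the plane meet at most once, so the connecting legs contribute a braid in which each pair of strands crosses at most once; there are only finitely many such braids on three strands, and hence $\psi$ is bounded, uniformly in $x$ and $p$, on the set of connecting-leg corrections.

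On the region where all three points lie in $D(r^{-1})$ the restriction of $s_{r}(f^{p})$ equals $\varphi_{r}\circ f^{p}\circ\varphi_{r}^{-1}$, and the affine isotopy $x\mapsto c_{t}x$ with $c_{t}\in[1/r,1]$ realizes $\varphi_{r}$ as isotopic to the identity embedding of $D^{2}$. Transporting the loop $l(f^{p};\varphi_{r}^{-1}(x))$ along this isotopy identifies $\gamma(s_{r}(f^{p});x)$ with $\gamma(f^{p};\varphi_{r}^{-1}(x))$ up to the change-of-basepoint isomorphism along the scaling track (an inner automorphism, invisible to the conjugation-invariant $\psi$) and up to the bounded connecting-leg correction above; thus $\psi(\gamma(s_{r}(f^{p});x))=\psi(\gamma(f^{p};\varphi_{r}^{-1}(x)))+O(1)$ uniformly. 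Since the $\Omega$-area of $D(r^{-1})$ is $1/r^{2}$, the change of variables $y=\varphi_{r}^{-1}(x)$ scales $\Omega^{3}$ by $r^{-6}$, and this region contributes $\tfrac{1}{r^{6}}M_{p}+O(1)$. On the complementary region at least one point is fixed, so at most two points move; by the confinement fact their middle-third braid is a power $\sigma^{2k}$ of a linking generator, and writing $\gamma(s_{r}(f^{p});x)=u\,\sigma^{2k}\,v$ with $u,v$ bounded connecting-leg braids gives, using $\psi(\sigma^{2})=0$ and homogeneity, $|\psi(\gamma(s_{r}(f^{p});x))|\le 2D(\psi)+|\psi(u)|+|\psi(v)|=O(1)$ uniformly in $p$; integrating over the finite-volume region, this contributes $O(1)$. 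Adding the two contributions gives the claimed estimate for $N_{p}$.

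The main obstacle is the localized identification in the third paragraph: the base point $x^{0}$ and the affine connecting legs generally leave $D(r^{-1})$, so $\gamma(s_{r}(f^{p});x)$ is not literally the rescaled braid, and one must argue that the discrepancy is invisible to $\psi$. This is exactly where the isotopy $\varphi_{r}\simeq\mathrm{id}$ and the conjugation-invariance of $\psi$ combine with the finiteness of affine-leg braids to reduce the error to a uniformly bounded term. The hypothesis $\phi(\sigma_{1})=0$ plays no role in the localized term; its function is precisely to annihilate the growing winding $\sigma^{2k}$ in the complementary region, which would otherwise grow linearly in $p$ and survive the $\tfrac{1}{p}$ limit as a spurious extra term.
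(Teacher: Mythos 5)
Your proof is correct and follows essentially the same route as the paper: decompose $X_{3}(D^{2})$ according to how many of the three points lie in $D(r^{-1})$, use $\phi (\sigma _{1})=0$ to annihilate the contribution where exactly two points are shrunk, and apply the change of variables $\varphi _{r}$ (with Jacobian factor $r^{-6}$) on the all-inside region. The only difference is one of bookkeeping: the paper asserts the pointwise identities exactly (the two-inside braid is a conjugate of a power of $\sigma _{1}$, and the all-inside integrand transforms exactly under rescaling), whereas you more cautiously track the basepoint and connecting-leg corrections as uniformly bounded errors that vanish after dividing by $p$ and homogenizing.
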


\begin{proof}
Let $x=(x_{1}, x_{2}, x_{3})$ be in $X_{3}(D^{2})$. 
For any $f\in\aread$ and any $r>1$, 
if two or three of $x_{1}, x_{2}, x_{3}$ are not in $D(r^{-1})$, 
then the pure braid $\gamma (s_{r}(f); x)$ is trivial. 
Hence we have 
\begin{align*}
\int _{x\in X_{3}(D^{2})}\phi (\gamma (s_{r}(f); x))\Omega ^{3}
&=\int _{x_{1}, x_{2}, x_{3}\in D(r^{-1})}\phi (\gamma (s_{r}(f); x))\Omega ^{3} \\
&\quad 
+3\int _{x_{1}, x_{2}\in D(r^{-1}), x_{3}\not\in D(r^{-1})}
\phi (\gamma (s_{r}(f); x))\Omega ^{3}
\end{align*}
for any $\phi\in Q(B_{3}(D^{2}))$. 

If $x_{1}, x_{2}\in D(r^{-1})$ and $x_{3}\not\in D(r^{-1})$, 
then the pure braid $\gamma (s_{r}(f); x)$ 
is a conjugate of a power of $\sigma _{1}$ 
and hence $\phi (\gamma (s_{r}(f); x))=0$. 
Since  
\[ \int _{x_{1}, x_{2}, x_{3}\in D(r^{-1})}\phi (\gamma (s_{r}(f); x))\Omega ^{3}
=\frac{1}{r^{6}}\int _{x\in X_{3}(D^{2})}\phi (\gamma (f; x))\Omega ^{3}, \]
we have the desired equality. 
\end{proof}

\section{Proof of the main theorem}

In this section, we prove the main theorem. 
Before starting the proof, 
we show the following lemma as a preliminary step. 
\begin{lemma}\label{area-contraction}
For any $f\in\aread$ and $r>1$, the following holds. 
\begin{enumerate}
\item[(I)] $d([s_{r}^{m}(f)], [s_{r}^{n}(f)])\geq (2\log r)|m-n|$ .
\item[(II)] $d([s_{r}^{n}(f)], [s_{r}^{n+1}(f)])\leq d([f], [s_{r}(f)])$.  
\end{enumerate}
\end{lemma}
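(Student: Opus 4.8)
The plan is to treat the two parts by entirely elementary means, without invoking the quasi-morphisms of the previous section; those are reserved for the power estimate (iii) of Theorem \ref{main}. The essential observation is that $s_{r}$ shrinks supports by a definite factor, while conjugation by area-preserving diffeomorphisms leaves the area of a support unchanged. I would record at the outset the two equivariance facts I rely on: $\supp(gcg^{-1})=g(\supp c)$ for $g\in\aread$, and $\supp(c_{1}\cdots c_{N})\subseteq\bigcup_{i}\supp(c_{i})$, both standard.

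For (I), assume without loss of generality $m>n$. I would bound $q_{\{s_{r}^{m}(f)\}}(s_{r}^{n}(f))$ from below by a support-area count. Writing $A=\area(\supp f)$, one checks that $\supp(s_{r}^{k}(f))=\varphi_{r}^{k}(\supp f)$, so that $\area(\supp(s_{r}^{k}(f)))=r^{-2k}A$ for every $k$. If $s_{r}^{n}(f)=c_{1}\cdots c_{N}$ with each $c_{i}$ a conjugate of $s_{r}^{m}(f)^{\pm1}$, then $\supp(c_{i})=g_{i}(\supp(s_{r}^{m}(f)))$ for some $g_{i}\in\aread$, whence $\area(\supp(c_{i}))=r^{-2m}A$ since $g_{i}$ preserves $\Omega$. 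As $\supp(s_{r}^{n}(f))\subseteq\bigcup_{i}\supp(c_{i})$, comparing areas gives $r^{-2n}A\le N r^{-2m}A$, i.e. $N\ge r^{2(m-n)}$. The crucial point is that the factor $A$ cancels, leaving no additive error. Hence $q_{\{s_{r}^{m}(f)\}}(s_{r}^{n}(f))\ge r^{2(m-n)}$, and taking logarithms yields $d([s_{r}^{m}(f)],[s_{r}^{n}(f)])\ge (2\log r)|m-n|$.

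For (II), I would exploit that each $s_{r}^{n}$ is a group homomorphism of $\aread$ into itself, so it carries conjugates to conjugates and respects products. Thus a factorization of $f$ into $q_{\{s_{r}(f)\}}(f)$ conjugates of $s_{r}(f)^{\pm1}$ maps under $s_{r}^{n}$ to a factorization of $s_{r}^{n}(f)$ into the same number of conjugates of $s_{r}^{n+1}(f)^{\pm1}$, giving $q_{\{s_{r}^{n+1}(f)\}}(s_{r}^{n}(f))\le q_{\{s_{r}(f)\}}(f)$; symmetrically $q_{\{s_{r}^{n}(f)\}}(s_{r}^{n+1}(f))\le q_{\{f\}}(s_{r}(f))$. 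Taking the maximum of each side and then the logarithm gives $d([s_{r}^{n}(f)],[s_{r}^{n+1}(f)])\le d([f],[s_{r}(f)])$.

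The only genuinely delicate point is the one highlighted in (I): a naive application of Lemma \ref{bound-power} with a fixed quasi-morphism, which scales merely by $r^{-6}$ under $s_{r}$, would produce a bounded lower bound and hence the wrong order of growth; passing instead to the area of supports is what yields the exact linear slope $2\log r$. I should also note that $f\neq e$, so that $A>0$ (and it then cancels), the case $f=e$ being excluded since it defines no element of $\gmet{\Ker\Cal}$.
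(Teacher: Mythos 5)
Your proposal is correct and follows essentially the same route as the paper: part (I) by comparing areas of supports, using that conjugation by elements of $\aread$ preserves the area of the support while $s_{r}$ shrinks it by the factor $r^{-2}$, and part (II) by pushing a factorization of $s_{r}(f)$ into conjugates of $f^{\pm1}$ forward under the homomorphism $s_{r}^{n}$. Your version merely spells out the support-containment facts and the cancellation of $\area(\supp f)$ that the paper leaves implicit.
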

\begin{proof}
Assume that $m<n$. 
Since the area of the support of $s_{r}^{m}(f)$ 
is just $r^{2(n-m)}$ times of that of $s_{r}^{n}(f)$, 
we have $q_{\{ s_{r}^{n}(f)\}}(s_{r}^{m}(f))\geq r^{2(n-m)}$. 
This implies (I). 

Suppose that $s_{r}(f)$ is written as a product 
\[ s_{r}(f)=(h_{1}f^{\varepsilon _{1}}h_{1}^{-1})\dots 
(h_{k}f^{\varepsilon _{k}}h_{k}^{-1}), \] 
where each $\varepsilon _{i}$ is $1$ or $-1$. 
Since the map $s_{r}\colon\aread\to\aread$ is a homomorphism, 
we have 
\[ 
s_{r}^{n+1}(f)=(s_{r}^{n}(h_{1})s_{r}^{n}(f)^{\varepsilon _{1}}s_{r}^{n}(h_{1})^{-1})\dots 
(s_{r}^{n}(h_{k})s_{r}^{n}(f)^{\varepsilon _{k}}s_{r}^{n}(h_{k})^{-1}) \]
and thus $q_{\{ s_{r}^{n}(f)\}}(s_{r}^{n+1}(f))\leq q_{\{ f\}}(s_{r}(f))$. 
Similarly the inequality $q_{\{ s_{r}^{n+1}(f)\}}(s_{r}^{n}(f))\leq q_{\{ s_{r}(f)\}}(f)$ 
also holds. 
Hence we have (II). 
\end{proof}
\begin{proof}[Proof of Theorem \ref{main}]
Fix $f\in\Ker\Cal$ and $r>1$. 
If we set $f_{n}=s_{r}^{n}(f)$, 
then the properties (i) and (ii) immediately follow 
from Lemma \ref{area-contraction}. 

Since the vector space $Q(B_{n}(D^{2}))$ is infinite-dimensional 
for $n\geq 3$ \cite{bf02}, 
considering the linear combination 
it is guaranteed  
that there exists a non-trivial homogeneous quasi-morphism $\phi$ on $B_{3}$
satisfying $\phi (\sigma _{1})=0$. 
Since the composition of the linear maps 
$\Gamma _{n}\circ Q(i)\colon Q(B_{n}(D^{2}))\to Q(\aread )$ 
is injective for $n\geq 3$ \cite{ishida14}, 
its image $\Gamma _{3}\circ Q(i)(\phi )$ 
is also non-trivial. 
We denote it by $\phi '$. 
By Proposition \ref{shrink}, 
$|\phi '(f_{n})|\leq |\phi '(f)|$ and 
thus $\phi '$ is in $Q(\aread , \{ f_{n}; n\geq 0\} )$. 
Moreover, choose $g\in\Ker\Cal$ such that $\phi '(g)\neq 0$. 
Then we have by Lemma \ref{bound-power}
\begin{align*}
\log m+\log\frac{|\phi '(g)|}{D(\phi ')+|\phi '(f)|}\leq d([g^{m}], [f_{n}; n\geq 0]) 
\text{ for any }m\in\N , 
\end{align*}
which is the property (iii). 
\end{proof}

\begin{proof}[Proof of Theorem \ref{cor}]
If the metric spaces $\gmet{\Ker\Cal}$ and $\R _{\geq 0}$ 
are quasi-isometric, 
then there exists 
a quasi-isometric embedding $\Phi\colon\gmet{\Ker\Cal}\to\R _{\geq 0}$. 
By the property (iii), 
we have $\Phi ([f]) <\Phi ([g^{m}])$ 
for sufficiently large $m\in\N$. 
By the property (i), 
there exists $n\in\N$ such that $\Phi ([g^{m}])<\Phi ([f_{n}])$. 
If we set $n_{m}=\min\{ n\in\N ; \Phi ([g^{m}])<\Phi ([f_{n}])\}$,  
then $\Phi  ([f_{n_{m}}])-\Phi ([g^{m}])$ is bounded independently on $m$ 
by the property (ii). 
However 
this contradicts the property (iii) 
since we can make $n_{m}$ arbitrarily large 
by taking larger $m$. 
\end{proof}

\begin{remark}
Let $M$ be a closed $C^{\infty}$-manifold 
and fix a symplectic form $\omega$ of $M$. 
Then the group $\hamm$ 
of Hamiltonian diffeomorphisms of $M$ 
is a simple group \cite{banyaga}. 

Let $U$ be a closed ball in $M$. 
Taking the subgroup $\hamu$ of $\hamm$, 
consisting of diffeomorphisms supported by $U$, 
as in the case of $D^{2}$ 
we can consider the shrinking homomorphism 
$s_{r}\colon\hamu\to\hamu$ 
and construct a sequence $\{f _{n}\}$ in $\hamm$ 
which satisfies the properties (i) and (ii) in Theorem \ref{main}. 
Hence if there exists a quasi-morphisms on $\hamm$ 
whose restriction in $\hamu$ have the property as Proposition \ref{shrink}, 
then Theorem \ref{main} holds for $\hamm$ 
and Theorem \ref{cor} for $\mathcal{M}(\hamm )$. 

When $M$ is a closed surface, 
we can construct quasi-morphisms on $\volm$ 
by Gambaudo-Ghys' way \cite{brandenbursky15p}
and verify by an argument similar to the case of $D^{2}$ 
that there exists a quasi-morphism $\phi$ on $\hamm$
satisfying $\phi (s_{r}(f))=r^{-6}\phi (f)$ for any $f\in\hamu$. 

When $M$ is the one point blow up 
of a closed symplectic $4$-manifold $(X, \omega _{X})$ 
such that $\omega _{X}$ and the first Chern class $c_{1}(X)$ 
vanish on $\pi _{2}(X)$, 
then $\hamm$ admits a non-trivial quasi-morphism $\mu$, 
which is called a Calabi quasi-morphism \cite{ep03}\cite{mcduff10}. 
If we take $U$ sufficiently small, 
then  $\mu$ satisfies 
$\mu (s_{r}(f))=r^{-8}\mu (f)$ for any $f\in\hamu$. 
\end{remark}

\begin{remark}
Let $\hamd$ and $\hamr$ be the groups of 
Hamiltonian diffeomorphisms of $D^{2n}$ and $\R ^{2n}$ respectively 
with respect to the standard symplectic form $\omega$. 
These groups admits the Calabi homomorphisms 
$\Cal\colon\hamd\to\R$ and $\Cal _{\R}\colon\hamr\to\R$ 
and their kernels $\Ker\Cal$ and $\Ker\Cal _{\R}$ are simple \cite{banyaga}. 
The group $\hamd$ admits a quasi-morphism $\tau$, 
which is constructed by Barge and Ghys \cite{bargeghys92}. 
The quasi-morphism $\tau\in Q(\hamd)$ satisfies 
$\tau (s_{r}(f))=r^{-2n}(f)$. 

Although the group $\Ker\Cal _{\R}$ 
does not admit non-trivial quasi-morphisms \cite{kotschick08}, 
Kawasaki constructed 
a homogeneous conjugation invariant function on $\Ker\Cal _{\R}$, 
which is called a partial quasi-morphism \cite{kawasaki14p}.  
If we denote it by $\mu$, 
then the equation $\mu (s_{r}(f))={r^{-2n}}\mu (f)$ is satisfied. 

Therefore a statement similar to Lemma \ref{bound-power} 
hold for $\tau$ and $\mu$. 
Hence Theorem \ref{main} holds for $\Ker\Cal$ and $\Ker\Cal _{\R}$ 
and Theorem \ref{cor} 
for $\mathcal{M}(\Ker\Cal)$ and $\mathcal{M}(\Ker\Cal _{\R})$. 
\end{remark}

\vskip 5pt
\noindent \textbf{Acknowledgments.}
The author wishes to express his gratitude 
to Jarek K\c{e}dra and Morimichi Kawasaki 
for reading the manuscript and several comments. 
The author is supported by JSPS Research Fellowships
for Young Scientists (26$\cdot$110). 

\providecommand{\bysame}{\leavevmode\hbox to3em{\hrulefill}\thinspace}
\providecommand{\MR}{\relax\ifhmode\unskip\space\fi MR }
\providecommand{\MRhref}[2]{%
  \href{http://www.ams.org/mathscinet-getitem?mr=#1}{#2}
}
\providecommand{\href}[2]{#2}

\end{document}